\numberwithin{equation}{section}
\newtheoremstyle{fancy1}{10pt}{10pt}{\itshape}{12pt}{\textsc\bgroup}{.\egroup}{8pt}{
}
\newtheoremstyle{fancy2}{10pt}{10pt}{}{12pt}{\itshape}{.}{8pt}{ }
\theoremstyle{fancy1}
\newtheorem{cor}[equation]{Corollary}
\newtheorem{lem}[equation]{Lemma}
\newtheorem{prop}[equation]{Proposition}
\newtheorem{thm}[equation]{Theorem}
\newtheorem{problem}[equation]{Problem}
\newtheorem{main}{Theorem}
\newtheorem*{main*}{Theorem}
\newtheorem*{cor*}{Corollary}
\theoremstyle{fancy2}
\newtheorem{definition}[equation]{Definition}
\newtheorem{rem}[equation]{Remark}
\newtheorem*{rem*}{Remark}
\newtheorem{example}[equation]{Example}
\newcommand{\cref}[1]{Corollary~\ref{#1}}
\newcommand{\Sph}{\mathbb{S}}
\newcommand{\Disc}{\mathbb{D}}
\newcommand{\CP}{\mathbb{C\mkern1mu P}}
\newcommand{\HP}{\mathbb{H\mkern1mu P}}
\newcommand{\CaP}{\mathrm{Ca}\mathbb{\mkern1mu P}^2}
\newcommand{\C}{{\mathbb{C}}}
\newcommand{\R}{{\mathbb{R}}}
\newcommand{\Z}{{\mathbb{Z}}}
\newcommand{\G}{\ensuremath{\operatorname{\mathsf{G}}}}
\renewcommand{\S}{\ensuremath{\operatorname{\mathsf{S}}}}
\def\con#1=#2(#3){#1 \equiv #2 \bmod{#3}}
\newcommand{\diam}{\ensuremath{\operatorname{diam}}}
\renewcommand{\sec}{\ensuremath{\operatorname{sec}}}
\newcommand{\no}{\noindent}
\begin{document}

\title{Rigidity Theorems for Submetries in Positive Curvature}

\author{Xiaoyang Chen}
\address{Department of Mathematics\\
University of Notre Dame\\
      Notre Dame, IN 46556\\USA
      }
\email{xchen3@nd.edu}

\author{Karsten Grove}
\address{Department of Mathematics\\
University of Notre Dame\\
      Notre Dame, IN 46556\\USA
      }
\email{kgrove2@nd.edu }

\thanks{The second named author is supported in part by a grant from the National Science Foundation and a Humboldt award.  He also wants to thank the Max Planck Institute and the Hausdorff Center for Mathematics in Bonn for hospitality}

\maketitle

\begin{abstract}
We derive general structure and rigidity theorems for submetries $f: M \to X$, where $M$ is a Riemannian manifold with sectional curvature $\sec M \ge 1$.
 When applied to a non-trivial Riemannian submersion, it follows that $\diam X \leq \pi/2 $. In case of equality,  there is a Riemannian submersion $\Sph  \to M$  from a unit sphere, and
as a consequence, $f$ is known up to metric congruence. A similar rigidity theorem also holds in the general context of Riemannian foliations.
\end{abstract}


\bigskip
The classical so-called Bonnet-Myers theorem asserts that a complete Riemannian $n$-manifold $M$ with sectional curvature, $\sec M \ge 1$ has diameter
 $\diam M \le \pi$. Moreover, if $\diam M = \pi$, $M$ is isomeric to the unit sphere $\Sph^n$ by Toponogov's \emph{diameter rigidity theorem}.
 
\smallskip
The purpose of this note is to analyze and prove analogous rigidity theorems in the general setting of submetries in Riemannian geometry. The example of Riemannian submersions is particularly appealing:

\begin{main} \label{diameter}
Let $f: M \rightarrow N$ be a  (non-trivial) Riemannian submersion with $\sec M \geq 1$. Then the base $N$ has $\diam (N) \leq \frac{\pi}{2}$,
where equality holds if and only if  there is a Riemannian submersion (possibly an isometry) $f_1: \Sph \to M$, where $\Sph$ is a unit sphere.
\end{main}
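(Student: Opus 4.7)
The approach is by contradiction: assume $L := \diam(N) > \pi/2$ and derive a contradiction by extracting two opposing bounds on the scalar second fundamental form of a fiber in a privileged horizontal direction. Choose $p, q \in N$ with $d_N(p, q) = L$, and let $F := f^{-1}(p)$ and $F' := f^{-1}(q)$; both fibers have positive dimension since $f$ is non-trivial. Let $\tilde\gamma : [0, L] \to M$ be the horizontal lift of a minimizing geodesic from $p$ to $q$, running from $x \in F$ to $y \in F'$, and set $\xi := \tilde\gamma'(0)$.

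The first bound comes from the Gauss formula. Because $f$ is a Riemannian submersion, $d_M(z, F') = d_N(f(z), q)$ for every $z \in M$, so the distance function $D(z) := d_M(z, F')$ restricts to the constant $L$ on $F$ and hence has vanishing intrinsic Hessian along $F$. A direct computation of the ambient Hessian gives $\operatorname{Hess}_M D(v, v) = -\langle \xi, \II_F(v, v)\rangle$ for $v \in T_x F$, obtained by projecting the geodesic $\exp_x(tv)$ through $f$ and using that the resulting curve in $N$ has zero initial velocity and acceleration $f_*(\II_F(v, v))$. Combining this with $\nabla_M D|_x = -\xi$ and the Gauss identity $\operatorname{Hess}_F D = \operatorname{Hess}_M D + \langle \nabla_M D, \II_F\rangle$ forces
$$\langle \II_F(v, v), \xi\rangle = 0 \quad \text{for every } v \in T_x F.$$

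The second bound comes from Toponogov. For unit $v \in T_x F$ and small $s > 0$, let $x' := \alpha(s)$ where $\alpha$ is the $F$-geodesic with $\alpha'(0) = v$. Since $x' \in F$, one has $d_M(x', y) \geq d_M(x', F') = L$. A Taylor expansion of the initial direction of the $M$-geodesic from $x$ to $x'$ (keeping in mind that $\alpha$ deviates from an $M$-geodesic by $\II_F(v,v)$) yields
$$\cos \beta_s = \tfrac{s}{2}\,\langle \II_F(v, v), \xi\rangle + O(s^2),$$
where $\beta_s$ is the angle at $x$ in the triangle with vertices $x, x', y$. Toponogov's hinge comparison in $\sec(M) \geq 1$ then gives $\cos d_M(x', y) \geq \cos s \cos L + \sin s \sin L \cos \beta_s$, and combining with $d_M(x', y) \geq L$ and sending $s \to 0$ produces $\langle \II_F(v, v), \xi\rangle \leq \cot L$. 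Together with the Gauss identity this forces $\cot L \geq 0$, i.e.\ $L \leq \pi/2$, contradicting the assumption $L > \pi/2$.

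In the equality case $L = \pi/2$, both bounds are saturated simultaneously, so equality holds in Toponogov's hinge comparison for every such diameter-realizing configuration. The rigidity part of Toponogov's theorem then implies that each such geodesic triangle bounds a totally geodesic isometric $2$-disk of constant curvature $1$ in $M$. Varying $v \in T_x F$ and the diameter-realizing pair $p, q$, one collects a coherent family of such spherical disks, and invoking the general submetry structure and rigidity theorems established earlier in the paper then assembles them into a Riemannian submersion $f_1 : \Sph \to M$ from a unit sphere. The main obstacle is precisely this last step: the local spherical rigidity must be globalized into a submersion from $\Sph$, which is where the paper's preceding structural machinery for submetries is essential.
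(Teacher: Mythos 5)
Your inequality half rests on a step that cannot be carried out as written. The function $D=d_M(\cdot,F')=d_N(f(\cdot),q)$ satisfies $D\le \diam N=L$ everywhere and $D\equiv L$ on $F$, so $x$ is a point where $D$ attains its global maximum; since a distance function has unit gradient wherever it is differentiable, $D$ cannot be differentiable at $x$ (and likewise $d_N(\cdot,q)$ is not differentiable at $p$). Hence the assertion $\nabla_M D|_x=-\xi$, the computation of $\operatorname{Hess}_M D(v,v)$ through $f$, and the Gauss identity are all illegitimate at exactly the point where you use them, and the claimed identity $\langle \II_F(v,v),\xi\rangle=0$ for your fixed $\xi$ is unjustified. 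The Toponogov hinge estimate itself is fine and gives $\langle \II_F(v,v),\xi\rangle\le\cot L$ for \emph{every} minimizing direction $\xi$ from $x$ to $F'$ (all such directions are horizontal), and the contradiction for $L>\pi/2$ can be repaired by replacing the Hessian step with criticality of the maximum point: taking $w=\II_F(v,v)$, there is some minimizing direction $\xi$ with $\langle w,\xi\rangle\ge 0$, contradicting $\langle \II_F(v,v),\xi\rangle\le\cot L<0$. This repaired argument is a genuinely different (and more elementary) route to $\diam N\le\pi/2$ than the paper's, which gets the bound structurally: if $\diam X>\pi/2$ the fibers over a diameter-realizing pair are single points (topological suspension proposition), impossible for a non-trivial Riemannian submersion.

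The equality case, however, is the real content of the theorem and your sketch does not prove it. Saturation of the limiting inequality as $s\to 0$ does not force equality in Toponogov's comparison for any actual triangle --- an equality in a second-order Taylor expansion carries no rigidity --- so no totally geodesic constant-curvature $2$-disks are produced, and even if they were, ``assembling'' them into a Riemannian submersion $f_1:\Sph\to M$ is precisely the hard rigidity statement; appealing at that point to ``the structural machinery established earlier in the paper'' is circular for this theorem, whose proof \emph{is} that machinery. Concretely, the paper's argument in the case $\diam N=\pi/2$ introduces the dual sets $A=f^{-1}(B)$ and $A'$ at distance $\pi/2$, uses that a Riemannian submersion is a manifold submetry respecting the boundary of saturated convex sets to rule out the boundary alternative (which would force point fibers), and in the boundaryless case shows $A$ and $A'$ are totally geodesic dual spheres via transversality, Frankel's theorem and a covering argument when $\diam M>\pi/2$ (forcing $\diam M=\pi$, so $M=\Sph$), while when $\diam M=\pi/2$ it invokes the Gromoll--Grove/Wilking diameter rigidity theorem to produce the submersion $\Sph\to M$. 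None of these steps is supplied or replaced by your disk construction, so as it stands the rigidity half of the statement is missing.
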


Here, by non-trivial we of course mean that $f$ is not an isometry. However, our result in particular includes the case of covering maps. For this case we note the interesting fact that there are irreducible  space forms $N^n= \Sph^n/\Gamma_n$ with $\diam N^n$ converging to $\pi/2$ as $n$ goes to infinity \cite{Mc}.

We point out that the conclusion of the theorem yields a \emph{complete metric classification} when the base $N$ has maximal diameter $\pi/2$ (see Corollary \ref{subrig}).
This is because Riemannian submersions from the standard unit sphere were classified in \cite{gromoll1988low} and \cite{wi2001}, and of course $f \circ f_1: \Sph \to N$ is a Riemannian submersion as well.

In particular, if $M$ and $N$ are simply connected, then metrically $f$ is either a Hopf fibration $\Sph^{2n+1} \to \CP^n$, $\Sph^{4n+3} \to \HP^n$, $\Sph^{15} \to \Sph^8(1/2)$, or the induced fibration $\CP^{2n+1} \to \HP^{n}$.

\medskip
It turns out that there is a similar rigidity theorem in the general context  of Riemannian foliations.

\begin{main}
Let $\mathcal{F}$  be a  Riemannian foliation on $M$ with leaves of positive dimensions,  where $\sec M \ge 1$. Then any two leaves are at distance at most $\pi/2$ from one another. Moreover, if equality occurs, there is a Riemannian submersion: $\Sph \to M$.
\end{main}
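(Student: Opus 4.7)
The plan is to reduce the statement to the general submetry rigidity theorems announced in the abstract of this paper. The bridge between Riemannian foliations and submetries is provided by \emph{Molino's structure theory}: the closure $\bar{\mathcal{F}}$ of a Riemannian foliation $\mathcal{F}$ on a complete Riemannian manifold $M$ (which is automatically compact here by Bonnet--Myers) is a singular Riemannian foliation whose leaves are precisely the topological closures of the leaves of $\mathcal{F}$. The leaf space $X := M/\bar{\mathcal{F}}$ inherits a quotient metric, and the natural projection $\pi : M \to X$ is a submetry. Since the leaves of $\mathcal{F}$ have positive dimension by hypothesis, so do those of $\bar{\mathcal{F}}$, and $\pi$ is thus non-trivial.

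With this setup in place, apply the general submetry version of \tref{diameter} to $\pi$: since $\sec M \ge 1$ and $\pi$ is non-trivial, one obtains $\diam X \le \pi/2$. The distance between any two leaves $L_1, L_2$ of $\mathcal{F}$ is then bounded by passing through $X$. Indeed, because leaves of a Riemannian foliation are dense in their closures, a standard approximation of points in $\bar L_i$ by points in $L_i$ gives $d(L_1, L_2) = d(\bar L_1, \bar L_2)$, and this in turn equals $d_X(\pi(L_1), \pi(L_2))$ since the fibers of the submetry $\pi$ are precisely the leaf closures. Hence $d(L_1, L_2) \le \diam X \le \pi/2$, establishing the diameter bound.

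For the rigidity part, suppose two leaves $L_1, L_2$ realize distance $\pi/2$. Then $\diam X = \pi/2$, and the equality clause of the submetry version of \tref{diameter} produces a Riemannian submersion $\Sph \to M$ from a unit sphere, which is the desired conclusion. The main obstacle in this strategy is conceptual rather than computational: one must have the submetry version of \tref{diameter} established earlier (as the principal structural content of the paper), and one must invoke Molino's theory to see that $\pi$ is a submetry at all. Once these foundations are set, the reduction is essentially formal, the only substantive point being the identification $d(L_1, L_2) = d_X(\pi(L_1), \pi(L_2))$ via density of leaves in their closures.
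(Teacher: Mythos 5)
Your overall strategy --- pass via Molino's theory to the manifold submetry $\pi\colon M \to M/\bar{\mathcal F}$ and then quote the paper's general submetry result --- is the paper's own route (see the discussion preceding Corollary~\ref{folrig}). But there is a genuine gap at the step where you ``apply the general submetry version of Theorem~\ref{diameter}: since $\sec M \ge 1$ and $\pi$ is non-trivial, one obtains $\diam X \le \pi/2$.'' No such statement exists, and it is false as you use it: non-triviality of a submetry does not bound $\diam X$ by $\pi/2$. For instance, $f = |p\,\cdot\,|\colon \Sph^n \to [0,\pi]$, or the orbit map of a rotation action on $\Sph^n$ with two fixed poles, are non-trivial (manifold) submetries from a space with $\sec \ge 1$ whose base has diameter $\pi$. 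Theorem~\ref{diameter} itself cannot be applied either, since $\pi\colon M \to M/\bar{\mathcal F}$ is in general not a Riemannian submersion: the leaf closures form only a \emph{singular} Riemannian foliation and may have varying dimensions. The correct tool is the submetry rigidity theorem (Theorem~\ref{maintheorem}), which allows $\diam X$ up to $\pi$ and yields a trichotomy; the substantive point of the proof is that the large-diameter cases (1) and (3) force \emph{point fibers} (via Propositions~\ref{metsusp} and~\ref{topsusp}, and, at $\diam X=\pi/2$ with boundary, via the soul points of the dual sets together with the lemma that manifold submetries respect boundaries of saturated convex sets). You have the needed hypothesis in hand --- all fibers are leaf closures of dimension $\ge 1$ --- but you only invoke the weaker consequence ``$\pi$ is non-trivial,'' which does not suffice to exclude these cases.

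Once that exclusion is made explicit, the rest of your reduction is fine: $\diam X > \pi/2$ is impossible because it would produce a point fiber, so any two leaves (equivalently, leaf closures) are at distance at most $\pi/2$; and if two leaves realize distance $\pi/2$, then $\diam X = \pi/2$, only case (2) of Theorem~\ref{maintheorem} can occur (case (1) at diameter $\pi/2$ and case (3) again require point fibers), and case (2) delivers the Riemannian submersion (possibly an isometry) $\Sph \to M$. Your identification $d(L_1,L_2) = d(\bar L_1, \bar L_2) = d_X(\pi(L_1),\pi(L_2))$ by density of leaves in their closures is correct and is implicitly used in the paper as well.
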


This again via the recently completed classification of Riemmannian foliations on the unit sphere \cite{gromoll1988low},  \cite{LW}, \cite{wi2001} yields a complete answer in the case of equality (for details see Corollary \ref{folrig}).

We note that with the exception of the Hopf fibration $\Sph^{15} \to \Sph^8(1/2)$ all Riemmannian foliations on $\Sph$ are homogeneous.
It is thus natural to wonder about isometric group action $\G \times M \to M$ with large orbit space, i.e., $\diam M/\G \ge \pi/2$. For this we prove

\begin{main}
Let $\G$ be compact Lie group acting on $M$ isometrically, where $\sec M \ge 1$. If $\diam M/\G \ge \pi/2$, then either

(1) $M$ is a twisted sphere with a suspension action, \ or

(2) There is a Riemannian submersion $\Sph \to M$ and the $\G$ action on $M$ is induced from a reducible action on $\Sph$, \ or

(3) $M$ is the Cayley plane $\Bbb{OP}^2$and $\G$ has an isolated fixed point.
\end{main}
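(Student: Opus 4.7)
Apply the submetry version of Theorem A to the orbit projection $\pi: M \to M/\G$. Since $\G$ acts non-trivially by isometries, $\pi$ is a non-trivial submetry, so $\diam M/\G \le \pi/2$; the hypothesis $\diam M/\G \ge \pi/2$ thus forces equality. The rigidity clause of the submetry theorem then places us in one of two scenarios: either there is a Riemannian submersion $f_1: \Sph \to M$ from a round unit sphere, or $M$ is an exceptional target admitting no such submersion, specifically the Cayley plane $\CaP$, whose diameter is already $\pi/2$.

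In the Cayley plane case, the equality $\diam M/\G = \diam \CaP = \pi/2$, combined with the fact that the isotropy $\Spin(9) \subset \F_4$ at any point of $\CaP$ acts transitively on every geodesic sphere around that point, forces via a slice-representation analysis that $\G$ must have a fixed point whose slice representation has no non-zero fixed vector; this is conclusion (3). In the sphere submersion case, use the classification of Riemannian submersions from unit spheres \cite{gromoll1988low, wi2001} to determine $f_1$ up to metric congruence, and then lift the isometric $\G$-action on $M$ to an isometric action $\widetilde{\G}$ on $\Sph$ (possibly after passing to a central extension of $\G$), realized as a linear subgroup of the orthogonal group of the ambient $\R^{n+1}$. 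If this lift preserves a non-trivial orthogonal splitting of $\R^{n+1}$, one is in conclusion (2). Otherwise $\widetilde{\G}$ acts irreducibly, and a diameter-rigidity argument combining the sphere classification with the forced equality $\diam M/\G = \pi/2$ shows that $M$ must be a twisted sphere carrying a $\G$-suspension structure, giving conclusion (1).

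The principal obstacle is the sharp case analysis in the equality case of the submetry rigidity: isolating the $\CaP$ exception inside the general submetry theorem, and, in the generic case, distinguishing reducible lifts to $\Sph$ (producing conclusion (2)) from irreducible ones, which must trigger the twisted-sphere suspension of (1). The technical heart lies in combining the Gromoll--Walschap--Wilking sphere classification with a careful slice-representation analysis at the $\G$-fixed strata realizing the diameter $\pi/2$ in $M/\G$.
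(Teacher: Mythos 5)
Your opening step is false, and it undermines the whole case analysis. The orbit map $\pi\colon M \to M/\G$ is a manifold submetry whose fibers are orbits, and orbits may be single points; the bound $\diam \le \pi/2$ in Theorem A depends precisely on the absence of point fibers, which is why it applies to non-trivial Riemannian submersions and to foliations with leaves of positive dimension, but not to group actions. Indeed the paper notes that $\diam M/\G$ can take any value up to $\pi$ (any action with a fixed point on the round sphere already has orbit space of diameter $\pi$). So the hypothesis $\diam M/\G \ge \pi/2$ does not force equality, and the regime $\diam M/\G > \pi/2$ (together with the boundary case at exactly $\pi/2$, handled by the lemma that manifold submetries respect the boundary of saturated convex sets) is exactly where conclusion (1) comes from: there the two extremal fibers are points, i.e.\ $\G$ has two fixed points at maximal distance, and critical point theory for the distance functions from these fixed points exhibits $M$ as two discs glued along a $\G$-equivariant diffeomorphism of $\Sph^{n-1}$, a possibly exotic twisted sphere with a suspension action. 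By collapsing the hypothesis to equality you delete this case from the start.

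Your attempted recovery of conclusion (1) inside the sphere-submersion alternative cannot work: if the lifted linear action on $\Sph$ were irreducible, its orbit space would have diameter strictly less than $\pi/2$, and $M/\G$, being a further metric quotient, would too, contradicting the hypothesis; so that branch is vacuous, and your argument never actually produces a twisted sphere --- which, being only a topological (possibly exotic) sphere, need not admit any Riemannian submersion from a round sphere at all. The paper instead deduces the statement directly from the manifold-submetry rigidity theorem (Theorem \ref{maintheorem}) applied to the orbit map: the suspension cases give (1); the boundaryless case at diameter $\pi/2$ gives either the Riemannian submersion $\Sph \to M$ with the $\G$-action induced from a reducible linear action, i.e.\ (2), or $M$ isometric to $\CaP$ with a point orbit, i.e.\ an isolated fixed point, which is (3). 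Your slice-representation discussion for the Cayley plane is likewise inverted: the fixed point is not extracted from transitivity of $\Spin(9)$ on geodesic spheres; it is already furnished as the point fiber in the rigidity proposition.
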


We point out that an isometric $\G$ action on the standard sphere $\Sph$ has orbit space with diameter at least $\pi/2$ if an only if it is a reducible linear action,
 and that no values are taking between $\pi/2$ and $\pi$, and diameter $\pi$ is equivalent to the action having fixed points, i.e., the action is a suspension.
Also, again knowledge of the isometry groups of the standard spaces in question yield complete answers.

\medskip

The different geometric topics described above are tied together through the notion of a \emph{submetry}. These play an important role in Riemannian as well as in  Alexandrov geometry.
There is a wealth of examples on the standard sphere, but a classification is not in sight.
It was proved by Lytchak (Lemma 8.1 in \cite{Ly})  that the radius rad $X$ of the base $X$ of a submetry $f: Y \to X$ where $Y$ is an Alexandrov space with curv$Y \ge 1$ satisfies rad$X \le \pi/2$.
Here we will consider the diameter rather than the radius and manifolds domains in place of Alexandrov spaces.

Our arguments are based on an adaption of the ideas and constructions in \cite{GG}, where a rigidity theorem for manifolds $N$ with $\sec N \ge 1$ and $\diam M = \pi/2$ was obtained.
To keep the exposition tight, familiarity with the methods and results of \cite{GG} and \cite{GS} is expected. For basic facts and tools from Riemannian geometry we refer to \cite{CE75} and \cite{Pet}.

It is our pleasure to thank Alexander Lytchak for helpful and constructive comments including informing us about Boltners thesis \cite{Bo}.

\section{Submetries}	

In this section we set up notation and analyze submetries $f: M \to X$ with $\sec M \ge 1$  and large $X$, i.e., with $\diam X \ge \pi/2$.

\smallskip
We will use the notation $|pq|$ to denote the distance between $p$ and $q$ whether in $M$ or in $X$. For any closed subset $L$ of either $M$ or $X$, and $r > 0$,  we let

$$ B(L,r) := \{ x \ | \  |x L|  <  r \ \}, $$

\no be the open $r$-neighborhood of $L$, and

$$C(L,r) := \{ x \ | \  |x L|  \ge r \ \}$$

\no its complement.

Recall that by definition, $f$ is a \emph{submetry} if and only if $f(B(p,r)) = B(f(p),r)$ for all $p \in M$ and all $r > 0$.
It is well known and clear from the $4$-point characterization of a lower curvature bound that $X$ is an Alexandrov space with curv$X \ge 1$ (see \cite{BGP}).
\smallskip

Since obviously $\diam M \ge \diam X$, our investigations will be devided into the cases: (1) $\diam X = \pi$, (2) $\pi/2 < \diam X < \pi$, (3) $\pi/2 = \diam X < \diam M$, and (4) $\diam X = \diam M = \pi/2$.

\smallskip

The core of our arguments are convexity and critical point theory for non-smooth distance functions. In fact, from distance comparison with the unit 2-sphere, note that

\begin{lem}  \label{convexset}
For any closed set $L$, and any $r \ge \pi/2$ the set  $C(L,r)$ is  locally convex, and (globally) convex except for the case, where $r=\pi/2$ and $C(L,\pi/2)$  contains two points at distance  $\pi$.
 Moreover,  if $L \subset X$  and $K = f^{-1}(L)$ then $C(K,r) = f^{-1}(C(L,r))$.
\end{lem}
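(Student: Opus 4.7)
The plan is to split the statement into three parts: the preimage identity $C(K,r) = f^{-1}(C(L,r))$, local convexity, and global convexity away from the stated exception.

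First, I would prove the pointwise identity $d(q,K) = d(f(q), L)$. The inequality $d(f(q), L) \le d(q,K)$ is immediate since $f$ is $1$-Lipschitz, a direct consequence of $f(B(p,r)) = B(f(p),r)$. For the converse, given $\eps > 0$ pick $x \in L$ with $d(f(q), x) < d(f(q), L) + \eps$, and apply the submetry axiom with $\rho := d(f(q),L) + \eps$ to produce a point $q' \in B(q, \rho) \cap f^{-1}(x) \subseteq K$; hence $d(q, K) < d(f(q), L) + \eps$, and the preimage statement follows on letting $\eps \to 0$.

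For the two convexity claims, the strategy is to compare with the unit $2$-sphere $\Sph^2$ via Toponogov distance comparison, which is available both in $M$ and in $X$, each of which has curvature bounded below by $1$. Given a minimizing geodesic $\gamma\colon [0,\ell] \to M$ of length $\ell < \pi$ with endpoints $q_1, q_2 \in C(L,r)$, it suffices to show $d(\gamma(t), p) \ge r$ for every $p \in L$. In the non-degenerate regime $d(p, q_i) < \pi$, a model triangle $\bar p \bar q_1 \bar q_2 \subset \Sph^2$ with matching sides exists, and distance comparison gives $d(\gamma(t), p) \ge d(\bar\gamma(t), \bar p)$. The problem thus reduces to convexity in $\Sph^2$ of $C(\bar p, r)$, which is the closed metric ball of radius $\pi - r \le \pi/2$ around the antipode $-\bar p$. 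Such a ball is convex when $r > \pi/2$ (radius strictly less than $\pi/2$); when $r = \pi/2$ it is a closed hemisphere, still convex unless $\bar q_1, \bar q_2$ are antipodal on its boundary, in which case the minimizing geodesic is non-unique and some choice escapes the hemisphere.

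Taking the infimum over $p \in L$ transfers these conclusions back to $M$: global convexity of $C(L,r)$ holds provided no two of its points sit at mutual distance $\pi$ in $M$---exactly the exception in the statement---and local convexity follows at once, since any sufficiently small neighborhood contains only pairs at distance strictly less than $\pi$, so the preceding argument applies unconditionally. The main technical point I expect will be the degenerate Toponogov case $d(p, q_i) = \pi$ (when the model triangle fails to exist): here the triangle inequality alone gives $d(\gamma(t), p) \ge \pi - t$, and a short continuity/perturbation argument that replaces such a ``pole'' $p$ by nearby non-polar points of $L$ closes the remaining gap.
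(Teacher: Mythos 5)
Your proposal follows essentially the same route the paper intends: the identity $C(K,r)=f^{-1}(C(L,r))$ is deduced from the defining property $f(B(p,r))=B(f(p),r)$ (the paper dismisses this with ``the latter claim follows since $f$ is a submetry''), and the convexity claims come from Toponogov/Burago--Gromov--Perelman distance comparison with the unit $2$-sphere, reducing to convexity of the closed ball of radius $\pi-r\le\pi/2$ about the antipode of $\bar p$, with the hemisphere/antipodal pair as the only exception --- exactly the paper's ``from distance comparison with the unit 2-sphere''. The one weak spot is your patch for the degenerate case $|pq_1|=\pi$: replacing $p$ by ``nearby non-polar points of $L$'' is not available when $p$ is isolated in $L$ (e.g.\ $L=\{p\}$). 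The clean fix is the perimeter bound in curvature $\ge 1$: every triangle has perimeter at most $2\pi$, so $|pq_1|=\pi$ forces $|pq_2|+|q_1q_2|\le\pi$, whence for $t\le |q_1q_2|$ one gets $|p\,\gamma(t)|\ge \pi-t\ge \pi-|q_1q_2|\ge |pq_2|\ge r$; equivalently, the comparison triangle still exists when a side equals $\pi$, so no separate perturbation argument is needed. With that one-line correction your argument is complete and matches the paper's (unwritten) proof.
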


 The latter claim follows since $f$ is a submetry.  It will be important for us, that if $C(L,r)$ has non-empty boundary, then there is a unique point at maximal distance to the boundary, called its \emph{soul point}.
It follows that  $C(L,r)$ is contractible and if it is a subset of $M$,  then it and its small metric tubes are topologically  discs by Lemma 2.6 in \cite{GG}.

\smallskip

From Toponogov's \emph{maximal diameter theorem} and its analogue for Alexandrov spaces (cf., e.g., \cite{GM} remark 2.5) we have the following solution to case (1) above:

\begin{prop}[Metric Suspension]\label{metsusp}
Suppose $|xy| = \diam X = \pi$ and $Y = \{ z \in X \ | |zx| = |zy| = \pi/2\}$. Then $X = \Sigma Y$ the spherical suspension of $Y$, $M = \Sph = \Sigma E$ is a unit sphere with equator $E$, and $f$ is the suspension of its restriction $f_{|E}: E \to Y$.
\end{prop}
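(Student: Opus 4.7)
The plan is to establish the spherical suspension structures on $X$ and on $M$ separately, and then verify that $f$ intertwines them. For $X$: as the image of a submetry from a Riemannian manifold with $\sec \ge 1$, the space $X$ has Alexandrov curvature $\ge 1$, and since $|xy| = \diam X = \pi$, the Alexandrov-space maximal diameter theorem cited just above gives $X = \Sigma Y$ with $Y$ precisely the equidistant set $\{z : |zx| = |zy| = \pi/2\}$.

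For $M$: I would fix $p \in f^{-1}(x)$ and, by applying the submetry property to balls about $p$ of radius slightly larger than $\pi$, produce $q_\epsilon \in f^{-1}(y)$ with $\pi \le |pq_\epsilon| < \pi + \epsilon$ (the lower bound using the $1$-Lipschitz property of $f$). Compactness of $f^{-1}(y)$ in the Bonnet--Myers compact $M$ yields a limit $q$ with $|pq| = \pi$, so Toponogov's maximal diameter theorem identifies $M$ isometrically with a round unit sphere $\Sph$ and $p,q$ as antipodes. The same extraction applied to any other $p' \in f^{-1}(x)$ produces some point of $f^{-1}(y)$ antipodal to $p'$; since this point is also at distance $\pi$ from $p$ it must equal $q$, forcing $p' = p$ by uniqueness of antipodes on a sphere. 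Hence $f^{-1}(x) = \{p\}$, and symmetrically $f^{-1}(y) = \{q\}$.

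Let $E \subset \Sph$ be the equator with respect to $\{p,q\}$. For $e \in E$, the bounds $|xf(e)| \le \pi/2$, $|yf(e)| \le \pi/2$ combined with $|xf(e)| + |yf(e)| \ge |xy| = \pi$ force both to be $\pi/2$, so $f(E) \subset Y$; conversely, any $m \in f^{-1}(Y)$ has $|pm| \ge \pi/2$ and $|qm| \ge \pi/2$ while $|pm|+|qm|=\pi$ on the sphere with antipodal $p,q$, forcing $m \in E$. So $E = f^{-1}(Y)$. Next, parametrize $m \in \Sph$ in suspension coordinates $m = [e,t]$ with $t = |pm|$ and $e \in E$ the unique equatorial foot of the meridian from $p$ through $m$ to $q$. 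The sharpened Lipschitz inequalities give $|xf(m)| = t$ and $|yf(m)| = \pi - t$, placing $f(m) = [z,t]$ for some $z \in Y$ in $X = \Sigma Y$. Combining $|f(e)f(m)| \le |em| = |\pi/2 - t|$ with the spherical cosine law in $\Sigma Y$ reduces to $\sin t \, \cos d_Y(f(e),z) \ge \sin t$, which for $t \in (0,\pi)$ forces $z = f(e)$. Thus $f(m) = [f(e), t]$, i.e., $f$ is the suspension of $f|_E$. The most delicate step I anticipate is this last one: the bounds on $|xf(m)|$ and $|yf(m)|$ alone only place $f(m)$ on the correct $Y$-slice, and pinning it to the meridian through $f(e)$ requires the explicit cosine-law identity together with the Lipschitz bound $|f(e)f(m)| \le |em|$.
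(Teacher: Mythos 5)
Your proof is correct and follows the same route the paper takes: the paper simply invokes Toponogov's maximal diameter theorem for $M$ and its Alexandrov analogue for $X$, and you supply the routine verifications (point fibers over $x,y$, the identification $E=f^{-1}(Y)$, and the cosine-law equality pinning $f$ to meridians) that the paper leaves implicit.
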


\begin{rem}
The classification of general submetries from the unit sphere is an open and important problem.
\end{rem}


Now suppose $x, y \in X$ are at maximal distance in $X$, and $\pi/2 < |xy| = \diam X < \pi$. Then by distance comparison $y$ is uniquely determined by $x$ and vice versa. Then for $K = f^{-1}(x)$,
 $C(K,\pi/2)$ has nonempty boundary consisting of points at distance exactly $\pi/2$ to $K$. Moreover, $C(K,|xy|) = f^{-1}(y)$ is in the interior of the convex set $C(K,\pi/2)$.
 Clearly, $f^{-1}(y)$ is  at maximal distance to the boundary of $C(K,\pi/2)$, i.e., $ f^{-1}(y)$ is a soul point $q \in M$.
 Reversing the roles of $x$ and $y$ we see that  $f^{-1}(x) = p \in M$ is a point as well.

From critical point theory for $|x \cdot|$ in $X$ and $|p \cdot|$ in $M$,  it follows that $X$, respectively $M$, topologically is the suspension of its space of directions, $\Sph_xX$, respectively $\Sph_pM$, the latter being a unit sphere.
Moreover, $f$ induces a submetry $F: \Sph_p M \to \Sph_xX$ via its \emph{differential}, $Df: T_pM \to T_xX$, \cite{Ly}, and topologically $f$ is the suspension of $F$.

In all, we have proved

\begin{prop}[Topological Suspension]\label{topsusp}
Suppose $|xy| = \diam X > \pi/2$. Then $f^{-1}(x)= p$ and  $f^{-1}(y) = q$ are points in $M$. Moreover, topologically $X$ is the suspension of an Alexandrov space Y,
with $curv Y \ge 1$, $M$ is a sphere and $f$ is the suspension of a submetry $F: \Sph^k \to Y$,
where $k=dim M-1$.
\end{prop}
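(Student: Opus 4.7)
The strategy follows the outline sketched immediately before the statement. When $|xy|=\pi$ the conclusion is already contained in \pref{metsusp}, so it suffices to treat the generic case $\pi/2<|xy|<\pi$. The plan is first to pin down the fibers $f^{-1}(x)$ and $f^{-1}(y)$ as single points via a soul argument in a convex subset of $M$, and then to promote this to a topological suspension decomposition by critical point theory for the distance functions on $X$ and $M$, combined with Lytchak's differential of a submetry.

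Set $K:=f^{-1}(x)$. A spherical hinge comparison based at $x$ with sides of length in $(\pi/2,\pi)$ shows that $y$ is the unique point of $X$ realizing $|xy|=\diam X$. By \lref{convexset}, $C(K,\pi/2)=f^{-1}(C(\{x\},\pi/2))$ is convex in $M$ and contains $f^{-1}(y)=C(K,|xy|)$ in its interior, since $\partial C(K,\pi/2)$ consists of points at distance exactly $\pi/2$ from $K$ while points of $f^{-1}(y)$ lie at distance $|xy|>\pi/2$ from $K$. Because $f$ is a submetry, the function $|K\cdot|$ attains its global maximum $|xy|$ on $C(K,\pi/2)$ precisely on $f^{-1}(y)$, so every such point realizes the maximal distance to $\partial C(K,\pi/2)$. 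By the soul uniqueness statement recorded after \lref{convexset}, $f^{-1}(y)$ must consist of a single point $q\in M$, and exchanging the roles of $x$ and $y$ gives $f^{-1}(x)=\{p\}$. The submetry property then forces $|pq|=|xy|$, realized along any horizontal lift of a minimal segment from $x$ to $y$.

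Grove--Shiohama critical point theory applied to $|x\cdot|$ on $X$ and to $|p\cdot|$ on $M$ shows that on each space the only critical points of these distance functions are the two endpoints: any third critical point, together with a Toponogov hinge based at the other endpoint, would produce a point at distance strictly greater than the maximum. The resulting isotopy theorem identifies $X$ homeomorphically with the suspension $\Sigma Y$ for $Y:=\Sph_x X$, an Alexandrov space of curvature $\ge 1$, and identifies $M$ with the suspension of $\Sph_p M\cong\Sph^k$, where $k=\dim M-1$. To recognize $f$ itself as a suspension I would invoke Lytchak's result \cite{Ly} that the differential $Df\colon T_pM\to T_xX$ is a submetry, which restricts to a submetry $F\colon\Sph^k\to Y$ on unit tangent spheres, and then intertwine the two suspension structures via the flow along minimal segments issuing from $p$ on $M$ and from $x$ on $X$; these are matched by $f$ because a submetry sends horizontal minimal segments to minimal segments. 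The principal obstacle I anticipate is precisely this last identification: the two individual suspension structures are routine output of critical point theory, but assembling them into a commutative diagram under $f$ requires careful control of the gradient flow on $X$, which exists only in the Alexandrov sense and need not be smoothly compatible with the non-smooth map $f$.
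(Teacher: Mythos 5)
Your proposal is correct and follows essentially the same route as the paper: uniqueness of $y$ by distance comparison, identification of $f^{-1}(y)$ as the soul point of the convex set $C(f^{-1}(x),\pi/2)$ (and symmetrically for $f^{-1}(x)$), critical point theory for $|x\cdot|$ and $|p\cdot|$ to obtain the two suspension structures, and Lytchak's differential to produce the submetry $F\colon \Sph_pM\to\Sph_xX$ of which $f$ is topologically the suspension. The compatibility issue you flag at the end is real but is passed over just as quickly in the paper itself, so there is no discrepancy to report.
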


We now proceed to the cases where $\diam X = \pi/2$. Of course, in these cases $r = \pi/2$ for all convex sets $C(L,r)$ we consider. In fact, we now fix  $x, y \in X$ at maximal distance in $X$, i.e., $|xy| = \diam X$, and consider the convex sets

 $$B = \{ z \in X \ | \ |zx| \ge \pi/2 \} = C(x,\pi/2) \ ,  \  B' = \{ z \in X \ | \ |zB| \ge \pi/2 \} = C(B,\pi/2)$$

\no respectively

$$A = \{ p \in M \ | \ |p f^{-1}(x)| \ge \pi/2 \}= f^{-1}(B) \ ,  \  A' = \{ p \in M \ | \ |pA| \ge \pi/2 \}= f^{-1}(B')$$

\no Note, that in fact, the inequalities $\ge \pi/2$ in the definitions of $B$, $A$, etc., can now be replaced by equalities.

\smallskip

It is important to note, that from critical point theory applied to either of the non-smooth distance functions $|A  \cdot|$ or $|A'  \cdot|$, it follows that

\begin{equation}\label{decomp}
M = \Disc (A)  \cup  \Disc(A'),
\end{equation}

\no where $\Disc (A)$ and  $\Disc (A')$ are closed distance tubes around $A$ and $A'$ respectively. As  mentioned above, if say $A'$ has non-empty boundary, it follows that $\Disc(A')$ topologically is a disc.
\medskip

There are now two scenarios depending on whether or not $A$ and or $A'$ has non-empty boundary. Whether or not $\diam M > \pi/2$, the above decomposition of $M$ means that the arguments used in Propositions 3.4, 3.5 in
\cite{GG}
carries over verbatim, yielding

\begin{lem}
The dual sets $A$ and $A'$ either both have empty boundary, or they both have non-empty boundary.
\end{lem}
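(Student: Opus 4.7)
The plan is to argue by contradiction, exploiting the natural self-duality between $A$ and $A'$. First I verify that the pair is honestly dual, $A = C(A', \pi/2)$ and $A' = C(A, \pi/2)$, which follows from $\diam X = \pi/2$, the convexity statement in \lref{convexset}, and the submetry identity $C(K,r) = f^{-1}(C(L,r))$ applied to $K = f^{-1}(L)$. Consequently the roles of $A$ and $A'$ are interchangeable, so it suffices to exclude the possibility that $A$ has empty Alexandrov boundary while $A'$ has nonempty Alexandrov boundary.

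Under this hypothesis, $A$ is a closed proper convex subset of $M$ with empty Alexandrov boundary, hence a closed totally geodesic submanifold of $M$, while $\Disc(A')$ is topologically a disc with a unique soul point $q_0 \in A'$ realizing the maximal distance from $\partial \Disc(A')$. One then carries out the critical point analysis of the distance functions $|A \cdot|$ and $|A' \cdot|$ on $M$, mirroring verbatim the proofs of Propositions~3.4 and~3.5 in \cite{GG}: using the decomposition $M = \Disc(A) \cup \Disc(A')$ together with $\sec M \ge 1$, one shows that the dual structure of $A$ and $A'$ forces $A$ itself to carry a nonempty Alexandrov boundary, which is the desired contradiction. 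Interchanging the roles of $A$ and $A'$ via the symmetry noted above concludes the argument.

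The main obstacle lies in justifying that the critical point arguments of \cite{GG} -- originally written for the case in which $A, A'$ arise from a pair of antipodal points in a Riemannian manifold with $\diam M = \pi/2$ -- transfer without modification to the submetry setting, where $A = f^{-1}(B)$ and $A' = f^{-1}(B')$ are preimages of convex subsets of the base $X$. The enabling observations are that for any $p \in M$ and any $z \in X$ one has $|p f^{-1}(z)| = |f(p) z|$, so that all relevant distance functions on $M$ pull back from the base; that $A$ and $A'$ are the full $f$-preimages of $B$ and $B'$, so that their Alexandrov-boundary behavior is faithfully controlled by that of $B, B'$; and that the critical point theory itself depends only on the lower curvature bound $\sec M \ge 1$ together with the convex and soul structure already established in \lref{convexset}. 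Once these three points are in place, the argument of \cite{GG} applies without change.
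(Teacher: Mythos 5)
Your argument is in substance the same as the paper's: both reduce the statement to the decomposition $M = \Disc(A)\cup\Disc(A')$ together with the observation that, because $f$ is a submetry and $\diam X=\pi/2$, every point of $A$ is at distance exactly $\pi/2$ from $A'$ and vice versa, and then invoke the arguments of Propositions~3.4 and 3.5 of \cite{GG} verbatim; your double-duality remark $A=C(A',\pi/2)$, $A'=C(A,\pi/2)$ (which holds since $x\in B'$, so $B=C(B',\pi/2)$, and then one pulls back via \lref{convexset}) and your pullback identity $|p\,f^{-1}(z)|=|f(p)z|$ are exactly the points the paper makes when it notes that the inequalities $\ge \pi/2$ may be replaced by equalities. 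One caution: your second ``enabling observation,'' that the Alexandrov-boundary behavior of $A,A'$ is ``faithfully controlled'' by that of $B,B'$, is false in general -- in the paper's tennis ball example $B$ is a single point with empty boundary while $A=f^{-1}(B)$ is a geodesic arc with nonempty boundary, and precisely this failure is what motivates the ``respects the boundary'' discussion. Fortunately this claim is not needed: the Gromoll--Grove argument is intrinsic to $M$ and uses only the convexity of $A$ and $A'$, the exact mutual distance $\pi/2$, and the disc/soul structure from Lemma~2.6 of \cite{GG}, so your proof survives with that sentence deleted.
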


In the non-rigid case where both $A$ and $A'$ have non-empty boundary and hence $M$ topologically is a sphere, we can complete the investigation along the lines above, as long as the restricted submetries

$$ f: A \to B   \quad  and  \quad  f: A' \to B' $$

\no satisfy the following natural condition.

\begin{definition}
A submetry $f: A \to B$ \emph{respects the boundary} if whenever $f^{-1}(z) \cap \partial A \ne \emptyset$ then $f^{-1}(z) \subset \partial A$.
\end{definition}

\begin{rem} \label{soulpoint}
Note that if $f: A \to B$ respects the boundary then the distance function $|\partial A \cdot|$ is constant along any "fiber"$f^{-1}(x)$ in the interior of $A$.
\end{rem}

As we will see in the next section, there are several natural geometric situations where indeed $f$ respects the boundary. However, here are some examples where the boundary is not respected.

\begin{example}
1. Let $A$ be a hemisphere and $p$ a point on the boundary of $A$.  Let $f: A \to [0,\pi] = B$ be the submetry $f:= |p \cdot|$, i.e.,  $f^{-1}(t)=\{x\in A \ |  \ |xp|=t \}, 0 \leq t \leq \pi $.
Then for any $0<t<\pi$, $f^{-1}(t) \cap \partial A \ne \emptyset$ but $ f^{-1}(t)$ is not contained in $\partial A$. Note that $f^{-1}(t)$ has non-empty boundary for $0<t<\pi$.


2. In this \emph{tennis ball example}, let $A \subset \Sph^2$ be a geodesic of length $\pi$ , and $f: \Sph^2 \to [0, \pi/2]$ again the submetry $f:= |A \cdot|$, i.e.,  $f^{-1}(t)=\{x\in \Sph^2 \ |  \ |x A|=t \}, 0 \leq t \leq \pi/2 $.
Then $A'$ is a geodesic arc opposite to $A$, $B$ is the left end point of $[0,\pi/2]$ and $B'$ the right end point.
For the restricted submetry $f: A \to B$,  $A$ itself is a fiber having non-empty boundary. Of course $A \cap \partial A \ne \emptyset$
but $A$ is not contained in $\partial A$.

\end{example}

\begin{rem}If $A$ and $A'$ have non-empty boundary (allowing points) which is respected by $f$,  then by Remark \ref{soulpoint}, their soul points are the inverse images of the soul points in $B$ and $B'$
for the distance functions to $f(\partial A) \subset \partial B$ and to $f(\partial A') \subset \partial B'$ and one recovers the exact same structure as in Proposition \ref{topsusp} above. The tennis ball example above shows that this is not the case if  $f$ does not respect the boundary.
\end{rem}

 \smallskip

In the remaining case where $A$ and $A'$ are smooth totally geodesic submanifolds we have

\begin{prop}[Rigidity]\label{subrigidity}
Assume $\diam X = \pi/2$ and the pair of dual sets $A, A'$  have no boundary. Then either

(1) There is a Riemanian submersion $F: \Sph \to M$ (possibly an isometry) and moreover $F^{-1}(A)$ and $F^{-1}(A')$ is a pair of totally geodesic dual sub spheres in $\Sph = F^{-1}(A) * F^{-1}(A')$.

\no or

(2) $M$ is isometric to the Cayley plane $\Bbb{OP}^2$, and one of the fibers of $f$ is a point.
\end{prop}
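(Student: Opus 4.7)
The plan is to exploit the boundary-free hypothesis on $A$ and $A'$ to place $M$ itself into the setting of the Gromoll--Grove dual rigidity theorem, and then read off the required dichotomy from its list of rigid examples.

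First I collect the geometric consequences. Since $A$ and $A'$ are closed, convex, boundary-less and at mutual distance $\pi/2$ in a manifold with $\sec\ge 1$, each is a smooth closed totally geodesic submanifold of $M$. The decomposition $M=\Disc(A)\cup\Disc(A')$ provided by critical point theory shows that every point of $M$ lies on a unit-speed normal geodesic of length $\pi/2$ from $A$ to $A'$; thus $(A,A')$ forms a pair of totally geodesic dual submanifolds of $M$ in the sense of \cite{GG}, and in particular $M$ has radius $\pi/2$. I then invoke the dual rigidity theorem of \cite{GG}, completed by \cite{wi2001}: under these hypotheses $(M,A,A')$ is isometric, after normalisation, to a compact rank one symmetric space together with its canonical pair of totally geodesic dual submanifolds, i.e., $M$ is $\Sph^n$, a spherical space form $\Sph^n/\Gamma$, $\CP^n$, $\HP^n$, the half-radius $\Sph^8(1/2)$, or the Cayley plane $\Bbb{OP}^2$.

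For each of the first five possibilities I exhibit the Riemannian submersion $F\colon\Sph\to M$ as, respectively, the identity, the Riemannian covering, the complex Hopf fibration $\Sph^{2n+1}\to\CP^n$, the quaternionic Hopf fibration $\Sph^{4n+3}\to\HP^n$, or the octonionic Hopf fibration $\Sph^{15}\to\Sph^8(1/2)$. In every case $F^{-1}(A)$ and $F^{-1}(A')$ are the standard totally geodesic dual sub-spheres realising $\Sph$ as their metric join, which gives conclusion~(1). In the Cayley plane case no Riemannian submersion from a round sphere onto $\Bbb{OP}^2$ exists (by the classification in \cite{gromoll1988low}), while the only totally geodesic dual pair in $\Bbb{OP}^2$ at distance $\pi/2$ consists of a Cayley projective line $\Sph^8$ and a single antipodal point; one of $A=f^{-1}(B)$, $A'=f^{-1}(B')$ is therefore a single point, so one fibre of $f$ is a point, giving conclusion~(2).

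The main obstacle I expect is checking that the hypotheses of the Gromoll--Grove dual rigidity theorem really transfer to $(M,A,A')$ in our setup, since the raw input from the submetry construction is phrased in terms of the Alexandrov base $X$; the reconciliation is that once $A$ and $A'$ are known to be smooth totally geodesic dual submanifolds of $M$, the critical point arguments of \cite{GG} depend only on the triple $(M,A,A')$ and are indifferent to $f$, which merely serves to identify $A,A'$ as the fibre preimages $f^{-1}(B)$, $f^{-1}(B')$.
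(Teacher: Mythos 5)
Your overall strategy---reduce to a short list of rigid models and exhibit the submersion $F:\Sph\to M$ case by case---matches the paper's in spirit, and your final list together with the Cayley-plane analysis (dual pair $=$ point $\cup\ \Sph^8(1/2)$, hence a point fiber) is correct. The central step, however, is a gap: there is no ``dual rigidity theorem'' in \cite{GG} or \cite{wi2001} that classifies triples $(M,A,A')$ for an \emph{arbitrary} boundaryless dual pair. The Gromoll--Grove theorem assumes $\diam M\ge\pi/2$, and its rigid case concerns the specific dual pair generated by a point $q$ at (near) maximal distance, i.e.\ $A=C(q,\pi/2)$ and $A'=C(A,\pi/2)\ni q$; the dimension count and the Section~4 classification there exploit that one member of the pair is the dual of a \emph{point}. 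Here $A=C(f^{-1}(x),\pi/2)$ is the dual of a whole fiber, a genuinely more general configuration (e.g.\ the pairs $(\CP^k,\CP^{n-k-1})$ with $0<k<n-1$ inside $\CP^n$ are dual pairs of this kind but neither member is the dual of a point), so your one-sentence reconciliation that the arguments ``depend only on the triple'' does not hold up. A symptom of the same over-reach is your claim that $M$ has radius $\pi/2$: this is false in general, since the round sphere, of radius $\pi$, is one of the admissible conclusions.

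The paper fills exactly this hole by splitting on $\diam M$. If $\diam M=\pi/2$, the diameter rigidity theorem of \cite{GG}, \cite{wi2001} is applied to $M$ itself. If $\diam M>\pi/2$, then $M$ is a topological sphere by \cite{GS}, and one must show it is round: after establishing (as in Propositions 3.4, 3.5 and Theorem 3.6 of \cite{GG}) that the normal exponential maps $\Sph_p^{\perp}\to A'$ and $\Sph_{p'}^{\perp}\to A$ are Riemannian submersions, one reads off from the disc decomposition \eqref{decomp} and transversality that $(i_A)_*:\pi_k(A)\to\pi_k(M)$ is an isomorphism in a range, forcing $a+a'\ge\dim M-1$; Frankel's theorem \cite{Frankel} gives the reverse inequality, so $a+a'=\dim M-1$ and $\Sph_{p'}^{\perp}\to A$ is a covering. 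Lemma 4.1 and Proposition 4.2 of \cite{GG} then show $A$ is either a constant-curvature sub-sphere or a closed geodesic of length $2\pi$; in either case the convexity of $A$ forces $\diam M=\pi$, and Toponogov's theorem identifies $M$ with the unit sphere. This second half is the substance of the proof and is entirely absent from your write-up; as it stands, your argument assumes the conclusion of that analysis rather than proving it.
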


\begin{proof}
If $\diam M = \pi/2$ this is a direct consequence of the diameter rigidity theorem of \cite{GG}, and \cite{wi2001}.

No assume $\diam M > \pi/2$, and thus $M$ by \cite{GS} is a sphere topologically. In this case we need to show that $A$ and $A'$ are dual spheres with diameter $\pi$. This is immediately obvious in the exceptional case of
Lemma \ref{convexset},  where $A$ contains two points at distance $\pi$, i.e., $M$ is the unit sphere.

Now suppose, that both $A$ and $A'$ are totally geodesic submanifolds of dimensions $a \ge 1$, and $a' \ge 1$.  Arguing exactly as Propositions 3.4, 3.5 and Theorem 3.6 in \cite{GG},
one shows that any unit normal vector to either of $A$ and $A'$ defines a minimal geodesic to the other, and that the corresponding maps $\Sph_p^{\perp} \to A'$, $p \in A$ and $\Sph_{p'}^{\perp} \to A$, $p' \in A'$
are Riemannian submersions, where $\Sph_p^{\perp}$ is the unit normal sphere of $A$ at $p$. We are going to show that $A$ is a sphere with diameter $\pi$  and so is $A'$ by similar arguments.
 By transversality and \ref{decomp}, we see that
$$(i_A)_*: \pi_k(A)\rightarrow \pi_k(M)$$
is injective for $k \leq dim(M)-a'-2$ and surjective for $k \leq dim(M)-a'-1$, where $i_A: A \rightarrow M$  is the inclusion map.
It follows that $a>dim(M)-a'-2$ since $A$ is a closed submanifold of $M$ and $M$ topologically is a sphere.
On the other hand, since $A$ and $A'$ are disjoint totally geodesic submanifolds of $M$, we have $a+a' \leq dim(M)-1$ by Frankel's Theorem \cite{Frankel}.
Thus $a+a'=dim(M)-1$ and hence $dim(S_{p'}^{\perp})=dim(A)$. It follows that the Riemannian submersion  $\Sph_{p'}^{\perp} \to A$ is a covering map.

By the arguments in the proof of Lemma 4.1 and Proposition 4.2 in \cite{GG}, we see that $A$ is either simply connected
or a closed geodesic of length $2\pi$.  In the simply connected case,  $A$ is a
constant curvature $1$ sub-sphere since  $\Sph_{p'}^{\perp} \to A$ is a covering map. Because $A$ is also convex, the diameter of $M$ is $\pi$ and we are done.
Otherwise $A$ is a closed geodesic of length $2\pi$ and again  $diam (M)=\pi$ since $A$ is convex.
\end{proof}

\begin{rem}[Fat joins]\label{fat joins}
Since $f \circ F: \Sph \to X$ is a submetry, to understand the possible submetries occurring in (1) above, it suffices to describe submetries $f: \Sph = A * A' \to X$ where of course $f_{|A}:A \to B$ and $f_{|A'}:A' \to B'$ are submetries from standard dual sub spheres of $\Sph$. For this we note that any point of $x\in B$ is joined to any point $x' \in B'$ by a minimal geodesic of length $\pi/2$. Moreover, for any direction $\xi \in S_x^{\perp}$ orthogonal to $B$ there is a minimal geodesic from $x$ to $B'$ with direction $\xi$, and the maps 

$$P_x: S_x^{\perp} \to B', x \in B, \  \  \text{and similarly } \  \  P_{x'}: S_{x'}^{\perp} \to B, x' \in B'$$

\no are submetries. Now pick points $p \in f^{-1}(x)$ and $p' \in f^{-1}(x')$ and note that the submetry $\Sph_p^{\perp} \to B'$ which is the composition  of the isometry  $\Sph_p^{\perp} \to A'$ with $f_{| A'}: \Sph_p^{\perp} \equiv A' \to B'$ also can be written as $P_x \circ Df_p: \Sph_p^{\perp} \to B'$ and similarly for $p'$. These data completely describe $f$.

As an example of the description provided above take $f_{| A}: A = \Sph^{2m+1} \to \CP^m = B$ and $f_{| A'}: A' = \Sph^{4n+3} \to \HP^n = B'$ to be Hopf maps, and let $S_x^{\perp} = \CP^{2n+1}$ and $S_{x'}^{\perp} = \Sph^{2m+1}$.  
\end{rem}

From the above remark it follows that the rigid submetries from part (1) of the above proposition, can be though of as ``fat joins" of the restricted submetries $f_{| A}$ and $f_{| A'}$

\section{Submersions, Foliations and Group Actions}  

A common feature of submetries $f: M \to X$ arising from Riemannian submersions, isometric group actions, or more generally from (singular) Riemanninan foliations is
 that all ``fibers", $f^{-1}(x)$, $x \in X$ are (smooth) closed submanifolds of $M$ (allowing a discrete set of points)
and every geodesic on $M$ that is perpendicular at one point to a fiber remains perpendicular to every fiber it meets. 
We will refer to such a submetry as a \emph{manifold submetry} (This is actually a special case of the notion of \emph{splitting submetries} introduced by Lytchack in his thesis \cite{Ly}).

The following result due in a special case to Boltner \cite{Bo}, lemma 2.4 (cf. also \cite{FGLT} lemma 4.1) allows us to invoke all results from section 1.

\begin{lem}
Let $f: M \to X$ be a manifold submetry between a complete Riemannian manifold $M$ and an  Alexandrov space $X$.
Then $f$ respects the boundary $\partial C$ of any closed convex subset  $C \subset M $ saturated by the fibers over $f(C)$, i.e., with $C = f^{-1}(f(C))$. 
\end{lem}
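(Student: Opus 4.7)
The plan is to prove the conclusion infinitesimally, by showing that the tangent cone $T_pC$ is a linear subspace of $T_pM$ (equivalently, $p \notin \partial C$) if and only if $T_qC$ is a linear subspace of $T_qM$ for any other $q$ in the fiber $F = f^{-1}(f(p))$. Throughout I write $z = f(p)$, $B = f(C)$, and split $T_pM = V_p \oplus H_p$ into vertical and horizontal components with $V_p = T_pF$.

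First I would exploit the saturation hypothesis $C = f^{-1}(B)$ to conclude that $F \subset C$, so that $V_p \subset T_pC$. This forces the direct sum decomposition $T_pC = V_p \oplus (T_pC \cap H_p)$, and reduces the problem to comparing the horizontal parts $T_pC \cap H_p$ and $T_qC \cap H_q$. Next I would use the manifold submetry structure: every horizontal geodesic $\gamma(t) = \exp_p(tv)$ with $v \in H_p$ projects to the geodesic $\exp_z(t\,Df_p(v))$ in $X$, and the differential $Df_p|_{H_p} : H_p \to T_zX$ is itself a submetry (Lytchak). Saturation then converts $\gamma(t) \in C$ into $Df_p(v) \in T_zB$, giving the key identification
$$T_pC \cap H_p \;=\; (Df_p|_{H_p})^{-1}(T_zB).$$

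The heart of the argument is to compare these two preimages. For this, one produces a linear isometry $\Phi : H_p \to H_q$ that intertwines the two differentials, $Df_q \circ \Phi = Df_p$; such a $\Phi$ is obtained by lifting a smooth path in the fiber $F$ from $p$ to $q$ via horizontal holonomy, the intertwining being automatic from the fact that horizontal geodesics project to the same curve in $X$ regardless of the starting lift. Once $\Phi$ is available, it carries $(Df_p|_{H_p})^{-1}(T_zB)$ isometrically onto $(Df_q|_{H_q})^{-1}(T_zB)$, so one is a linear subspace if and only if the other is, and consequently $p \in \partial C$ if and only if $q \in \partial C$.

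The main obstacle is the careful construction of the intertwining isometry $\Phi$. In the Riemannian submersion case it is just the usual horizontal parallel transport along smooth paths in the fiber; in the general manifold submetry setting, $T_zX$ may be singular and one must verify that the horizontal holonomy is actually a well-defined linear isometry compatible with the differentials at $p$ and $q$. This compatibility is precisely the content of Boltner's Lemma 2.4 in \cite{Bo} (and Lemma 4.1 of \cite{FGLT}), and this is where we would appeal to it.
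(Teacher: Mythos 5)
Your overall architecture (pass to tangent cones, split off the vertical part, compare the horizontal parts at two points of the same fiber) is coherent, but the two steps that carry all the weight are not established, and they are not minor. First, the identification $T_pC \cap H_p = (Df_p|_{H_p})^{-1}(T_zB)$ is justified only by the remark that horizontal geodesics project to geodesics and that saturation converts $\gamma(t)\in C$ into $f(\gamma(t))\in B$. But membership in the tangent cone of a convex set is not equivalent to short-time containment of the geodesic in that direction: at boundary points a direction of $T_pC$ (or of $T_zB$) can be tangent to the boundary while the geodesic leaves the set instantly, and in the Alexandrov space $X$ a geodesic in the prescribed direction $Df_p(v)$ need not even exist. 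Worse, at exactly the points that matter (where $Df_p|_{H_p}$ is non-injective), knowing $Df_p(v)\in T_zB$ at best produces \emph{some} horizontal $w\in T_pC$ with $Df_p(w)=Df_p(v)$, not $v$ itself; to conclude you would need that $T_pC$ is a union of fibers of $Df_p$, i.e., that the blow-up of a saturated convex set is saturated for the infinitesimal foliation --- a genuine technical statement you have not proved. Second, and more seriously, the linear isometry $\Phi:H_p\to H_q$ with $Df_q\circ\Phi=Df_p$ is not ``automatic from horizontal holonomy'': when $Df_p|_{H_p}$ is non-injective, a geodesic of $X$ issuing from $z$ has many horizontal lifts at $p$ and many at $q$, so there is no canonical matching, and the linearized holonomy along a path in the fiber is in general not an isometry intertwining the two differentials. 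This claim is a rigidity statement about the infinitesimal structure along a fiber that is at least as hard as the lemma itself; it is also not what Boltner's Lemma 2.4 says --- in the paper that lemma \emph{is} (a special case of) the statement being proved, and what is actually used from \cite{Bo} is Proposition 1.17, the horizontal lifting of geodesics. Note finally that an intertwining only up to an isometry $\iota$ of $T_zX$ would not suffice unless one also knew $\iota(T_zB)=T_zB$.

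The paper's own proof bypasses all infinitesimal analysis. Assuming some fiber meets both $C^{\circ}$ and $\partial C$, pick $p\in C^{\circ}$ and $q\in\partial C$ in that fiber, and a geodesic $\gamma$ through $q$ perpendicular to the fiber with $\gamma(\epsilon)\in C^{\circ}$ and $\gamma(-\epsilon)\notin C$ (possible by convexity, since the fiber is tangent to $\partial C$ at $q$). Lift $f\circ\gamma$ to a geodesic $\tilde\gamma$ through $p$ (Boltner, Proposition 1.17). Saturation gives $\tilde\gamma|_{[0,\epsilon]}\subset C$, and since $p\in C^{\circ}$ one gets $\tilde\gamma(-\epsilon)\in C$ for small $\epsilon$; but $\tilde\gamma(-\epsilon)$ and $\gamma(-\epsilon)\notin C$ lie in the same fiber, contradicting saturation. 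I would recommend reworking your argument along these lines rather than trying to repair the identification and the holonomy step.
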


\begin{proof}

Suppose there is a fiber $f^{-1}(x), x\in X$ such that $f^{-1}(x) \cap \partial C \ne \emptyset$,  we want to show  $f^{-1}(x) \subset \partial C$.
If not, there are two points $p,q \in f^{-1}(x)$ such that $p\in C^{\circ}$, the interior of $C$, and $q\in \partial C$. 
 Since $f^{-1}(x)$ is a smooth closed submanifold and $C$ is saturated by the fibers of $f$, 
the tangent space of $f^{-1}(x)$ at $q$ is tangent to the boundary $\partial C$. Then by the convexity of $C$, there is a  geodesic $\gamma$ in $M$ passing through
$q$, perpendicular to $f^{-1}(x)$ such that $\gamma(0)=q$ and $q_1:=\gamma(\epsilon) \in C^{\circ}$, $q_2:=\gamma(-\epsilon)\notin C$ for $\epsilon>0$
sufficiently small. By Proposition 1.17 in \cite{Bo}, $f \circ \gamma $ can be lifted to a geodesic
$\tilde{\gamma}$ at $p$. Let $p_1:=\tilde{\gamma}(\epsilon), p_2:=\tilde{\gamma}(-\epsilon)$. Since $\gamma_{|[0,\epsilon]} \subset C$, so is $\tilde{\gamma}_{|[0,\epsilon]}$
as $C$ is saturated by the fibers over $f(C)$. Then as $p \in C^{\circ}$, by taking $\epsilon$ small enough, we see that $p_2= \tilde{\gamma}(-\epsilon) \in C^{\circ} \subset C$.
This, however, is impossible since  $q_2$ and $p_2$ are in the same fiber and $C$ is saturated by the fibers over $f(C)$.
\end{proof}

Using this we have the following rigidity theorem:

\begin{thm}[Submetry rigidity]\label{maintheorem} 
Let $M$ be a Riemannian $n+1$- manifold with $\sec M \ge 1$, and $f: M \to X$ a manifold submetry with $\diam X \ge \pi/2$. Then one of the following holds:

(1) There is a manifold submetry $F: \Sph^n \to Y$ such that topologically, $f$ is the suspension of $F$, i.e., up to homeomorphism $f= \Sigma F: \Sigma \Sph^n \to \Sigma Y = X$, \  

(2) There are manifold submetries $f_{\pm}: \Sph_{\pm} \to X_{\pm}$,  such that either

\  \ (a) $M = \Sph_{-} * \Sph_{+}$ up to isometry and $f_{| \Sph_{\pm}} = f_{\pm}$,  or

\  \ (b) There is a Riemanian submersion $f_1: \Sph_{-} * \Sph_{+}  \to M$ such that $f \circ f_1$ is as in case (a), 

\no or

(3) $M$ is isometric to the Cayley plane $\Bbb{OP}^2$, and $f$ has a point fiber.
\end{thm}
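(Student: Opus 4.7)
The plan is to divide the argument by the value of $\diam X$, reducing each case to results from Section 1 together with the boundary lemma above. If $\diam X = \pi$, \pref{metsusp} gives $M = \Sph$ metrically and $f = \Sigma(f|_E)$, with $E \cong \Sph^n$ the equator; since $E$ is totally geodesic and $f$ a manifold submetry, $f|_E$ is itself a manifold submetry $F : \Sph^n \to Y$, which is case (1). If $\pi/2 < \diam X < \pi$, \pref{topsusp} realizes $f$ directly as the topological suspension of a submetry $F$ between the spaces of directions at the two point fibers; $F$ is again a manifold submetry because $f$ is, and we are in case (1).

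The substantive case is $\diam X = \pi/2$, where I would pick $x, y \in X$ with $|xy| = \pi/2$, form the dual convex sets $B, B' \subset X$ and their $f$-saturated preimages $A = f^{-1}(B)$, $A' = f^{-1}(B') \subset M$ as in Section 1. By the preceding boundary lemma for manifold submetries, $f$ respects both $\partial A$ and $\partial A'$, and the dichotomy lemma already proven forces $A$ and $A'$ to be simultaneously boundaryless or simultaneously with nonempty boundary.

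If $A, A'$ are both boundaryless, they are smooth totally geodesic submanifolds and \pref{subrigidity} applies. Its case (1) produces a Riemannian submersion $f_1 : \Sph \to M$ with $\Sph_\pm := f_1^{-1}(A), f_1^{-1}(A')$ dual totally geodesic subspheres, so that $\Sph = \Sph_- * \Sph_+$. Setting $f_\pm := (f|_{A,A'}) \circ (f_1|_{\Sph_\pm})$ yields manifold submetries by composition, and we land in case (2b) of the theorem, or case (2a) when $f_1$ happens to be an isometry. Case (2) of \pref{subrigidity} is exactly case (3) of the theorem.

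The main obstacle is the remaining sub-case, where both $A, A'$ have nonempty boundary and $M$ is a topological sphere via the disc decomposition $M = \Disc(A) \cup \Disc(A')$. Here the boundary-respecting property, via \rref{soulpoint}, identifies the soul of $A$ with $f^{-1}(\text{soul}(B))$ and similarly for $A'$; since the soul of a closed convex Alexandrov domain with nonempty boundary is a single point, the two ``pole'' fibers reduce to single points of $M$. With single-point pole fibers in hand, the critical-point-theoretic suspension argument of \pref{topsusp} transfers verbatim to realize $f$ topologically as $\Sigma F$ for a manifold submetry $F : \Sph^n \to Y$ between the spaces of directions at the two poles, giving case (1). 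Without the manifold submetry hypothesis (i.e., without the boundary lemma) these pole fibers could be positive-dimensional, as the tennis-ball example illustrates, and no suspension description would exist.
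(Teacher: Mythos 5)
Your proposal is correct and follows essentially the same route the paper intends: the paper gives no separate proof of Theorem~\ref{maintheorem}, but assembles it exactly as you do, splitting on $\diam X$ and invoking Propositions~\ref{metsusp} and \ref{topsusp}, the dual-set dichotomy, the boundary lemma for manifold submetries together with Remarks~\ref{soulpoint} and \ref{fat joins}, and Proposition~\ref{subrigidity} for the boundaryless case. Your handling of the nonempty-boundary case (point soul fibers forcing the suspension structure of Proposition~\ref{topsusp}) is precisely the content of the paper's remark following the tennis-ball example, so nothing essential is missing.
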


\begin{rem}
In the above Theorem, we point out that $f$ possibly could be a submetry with all fibers a finite set of points.

In case (2) the submetries are determined as in Remark \ref{fat joins}.

In case (3), let $p = A$ be a point fiber of $f$ and $A'$ its dual set $\Sph^8(1/2)$. Clearly $f$ restricted to a small convex ball around $p$ is a submetry, as is its restriction to its boundary with the induced length metric metric. The latter in turn is equivalent to the submetry $\Sph^{15} \to \Sph_{f(p)}X$ given by the differential of $f$ at $p$ .
Moreover, its fibers are differentiably equivalent to the fibers of $f$ over $X - (f(p) \cup B')$,
which in turn induce the submetry $f_{| A'}: \Sph^8(1/2) \to B'$. In other words, $f: \CaP  \to X$
is described via a manifolds submetry $F: \Sph^{15} \to Y$, which induces a manifold submetry $\Sph^8(1/2) \to B'$ via the Hopf fibration .

The complete list of manfolds $M$ with $\sec M \ge 1$ and $\diam M = \pi/2$ occurring in (b) consists of $\HP^n$, $\CP^n$, $\CP^{odd}/\Z_2$, and all \emph{reducible space forms},
i.e., $\Sph^n/\Gamma$, where the free isometric action by $\Gamma$ is reducible as a representation on $\R^{n+1}$.
\end{rem}


We now restrict attention to geometric objects of common interest where the above theorem will yield even more information.

The simplest is of course that of \emph{Riemannian submersions} $f: M \to N$, which in particular are locally trivial bundles with manifold fiber.
This obviously rules out the cases (1) and (3) in Theorem \ref{maintheorem}, since  $f$ does not have any point fibers. In particular, we have $\diam N \le \pi/2$.
What is left is the following analogue of Toponogov's maximal diameter theorem.

\begin{cor}[Submersion rigidity]\label{subrig} 
Let $M$ be a Riemannian manifold with $\sec M \ge 1$. Then for any non-trivial Riemannian submersion $f: M \to N$, we have $\diam N \le \pi/2$. Moreover, in case of equality, the following leads to an exhaustive list up to metric congruence:

(1)  Hopf fibrations $\Sph^{kn+ k-1} \to \mathbb{P}^n(k)$ and its induced fibration  $\mathbb{P}^{2n+1}(\C) \to \mathbb{P}^n(\mathbb{H})$,  where $k = \C, \mathbb{H}$, or its real dimension.

(2) The $\mathbb{Z}_2$ cover, $\mathbb{P}^{2n+1}(\C) \to \mathbb{P}^{2n+1}(\C)/\Z_2$,

(3) The covering maps $\Sph^n \to \Sph^n/\Gamma$, where $\Gamma$ acts reducibly on $\R^{n+1}$.

\no In addition some of these maps can be composed or factored yielding others.
\end{cor}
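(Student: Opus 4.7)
The plan is to derive this corollary from the submetry rigidity theorem (Theorem \ref{maintheorem}) together with the classification of Riemannian submersions from the round sphere due to Gromoll--Grove \cite{gromoll1988low} and Wilking \cite{wi2001}. The first observation is that every Riemannian submersion $f: M \to N$ is a manifold submetry, so Theorem \ref{maintheorem} applies as soon as $\diam N \ge \pi/2$. Cases (1) and (3) of that theorem each produce a fiber consisting of a single point, which is incompatible with a non-trivial Riemannian submersion by the constant-rank theorem: if one fiber is a point then all fibers are $0$-dimensional, forcing $f$ to be a Riemannian covering. In case (1) the target $X = \Sigma Y$ would then have to be a topological sphere obtained by suspending a quotient of $\Sph^k$, and for $X$ to itself be a Riemannian manifold the space $Y$ must already be a standard sphere, making $f$ an isometry and contradicting non-triviality; in case (3) a single point fiber coexists with $8$-dimensional $\Sph^8(1/2)$ fibers, which is absurd. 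Hence only case (2) of Theorem \ref{maintheorem} is possible, yielding simultaneously $\diam N = \pi/2$ and a Riemannian submersion $f_1: \Sph \to M$ (with $f_1 = \id$ when $M$ is itself round).

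Next, the composite $g := f \circ f_1: \Sph \to N$ is a Riemannian submersion from a round sphere. By \cite{gromoll1988low, wi2001}, up to metric congruence the only such submersions are (compositions of) the identity, free quotients $\Sph^n \to \Sph^n/\Gamma$ with $\Gamma \subset \O(n+1)$ finite, the three Hopf fibrations $\Sph^{2n+1} \to \CP^n$, $\Sph^{4n+3} \to \HP^n$, $\Sph^{15} \to \Sph^8(1/2)$, and the induced $\CP^{2n+1} \to \HP^n$. I would next enumerate, for each such $g$, the two-step factorizations $g = f \circ f_1$ through an intermediate Riemannian submersion, retaining only those where $f$ itself is non-trivial and $\diam N = \pi/2$. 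This enumeration produces exactly the three items of the corollary: the Hopf fibrations themselves and the induced $\CP^{2n+1} \to \HP^n$ in item (1); the free $\Z_2$-quotient $\CP^{2n+1} \to \CP^{2n+1}/\Z_2$ descending from the antipodal involution on $\Sph^{4n+3}$ in item (2); and the coverings $\Sph^n \to \Sph^n/\Gamma$ with $\Gamma$ acting reducibly on $\R^{n+1}$ in item (3).

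The step I expect to be most delicate is the last one, specifically the characterization of when a spherical space form satisfies $\diam \Sph^n/\Gamma = \pi/2$. Sufficiency is clear: any $\Gamma$-invariant orthogonal decomposition $\R^{n+1} = V \oplus V^{\perp}$ projects to a pair of orbits at mutual distance $\pi/2$. For necessity I would lift two orbits realizing the diameter to points $x, y \in \Sph^n$ satisfying $\langle x, gy\rangle \le 0$ for all $g \in \Gamma$; the linear span of $\{gy\}_{g \in \Gamma}$ is then a $\Gamma$-invariant subspace contained in the closed half-space orthogonal to $x$, hence a proper invariant subspace, so $\Gamma$ is reducible. Combined with the classification from \cite{gromoll1988low, wi2001}, this yields the claimed exhaustive list.
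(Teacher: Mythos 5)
Your overall route is the same as the paper's: view $f$ as a manifold submetry, rule out cases (1) and (3) of Theorem~\ref{maintheorem} because a non-trivial Riemannian submersion has no single-point fiber, conclude $\diam N\le \pi/2$ with case (2) in the equality case, and then classify $f\circ f_1:\Sph\to N$ by \cite{gromoll1988low}, \cite{wi2001} and enumerate factorizations. Two points, however, need repair. First, a minor one: your exclusion of cases (1) and (3) takes an unnecessary and partly unjustified detour. Once you know a point fiber forces $f$ to be a Riemannian covering, the argument is finished in one line: over the connected base all fibers of a covering have the same cardinality, so a singleton fiber makes $f$ injective, hence an isometry, contradicting non-triviality. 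The intermediate claims you use instead (``$Y$ must already be a standard sphere, making $f$ an isometry'' in case (1), and the assertion in case (3) that the point fiber ``coexists with $8$-dimensional fibers'', which is not part of the statement of Theorem~\ref{maintheorem}) are not justified as written, even though the conclusion is correct.

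Second, and this is a genuine gap, your necessity argument for the reducibility characterization in item (3) fails as stated. From $\langle x, gy\rangle\le 0$ for all $g\in\Gamma$ you conclude that the linear span of $\{gy\}_{g\in\Gamma}$ lies in the closed half-space $\{v:\langle x,v\rangle\le 0\}$; but a linear subspace is closed under $v\mapsto -v$, so a span is contained in a half-space only when it already lies in the hyperplane $x^{\perp}$, which is exactly what you have not yet shown (for a single $y$ with $\langle x,y\rangle<0$ the span of the orbit need not lie in the half-space). The fix is short: set $v=\sum_{g\in\Gamma} gy$. If $v\ne 0$, then $v$ is a nonzero $\Gamma$-fixed vector and $\R v$ is a proper invariant subspace. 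If $v=0$, then $0=\langle x,v\rangle=\sum_{g}\langle x,gy\rangle$ with every summand $\le 0$, so $\langle x,gy\rangle=0$ for all $g$, and the span of the orbit $\Gamma y$ is a proper (it misses $x$) invariant subspace. Either way $\Gamma$ is reducible. Alternatively one can argue, as the paper indicates, by convexity: the $\Gamma$-invariant set $C(\Gamma x,\pi/2)\subset\Sph^n$ is an intersection of closed hemispheres containing $\Gamma y$; either it contains an antipodal pair, forcing $\Gamma x\subset z^{\perp}$ for some $z$, or its canonical center is a $\Gamma$-fixed point, and in both cases $\Gamma$ is reducible. With this step corrected, your enumeration of factorizations of the submersions $\Sph\to N$ from the Gromoll--Grove--Wilking classification does yield the stated list, in line with the paper.
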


\begin{rem}
As examples of what the possible operations in the above theorem lead to, we point out, e.g.,  $\Sph^{4n+3} \to \mathbb{P}^{2n+1}(\C)/\Z_2 $ and $ \Sph^{2n+1}/\Gamma \to \mathbb{P}^n(\mathbb{C})$ if $\Gamma  = \Z_m \subset \S^1$, $\S^1$ the Hopf circle.
\end{rem}


We next move to \emph{Riemannian foliations}, $\mathcal{F}$ on $M$. Here not all leaves need to be closed,
 so we consider the decomposition $\bar{\mathcal{F}}$ of $M$ by the closures of the leaves in $\mathcal{F}$.
It is well known \cite{Mo} that this yields a manifold submetry $f: M \to M/\bar{\mathcal{F}} =: X$.
 Again, when all leaves have non-zero dimension the cases (1) and (3) in Theorem \ref{maintheorem} are excluded, and in particular $\diam X \le \pi/2$ for this case as well. Specifically we have, 
 
\begin{cor}[Foliation rigidity]\label{folrig}
Let $M$ be a Riemannian manifold with $\sec M \ge 1$ and $\mathcal{F}$ a Riemannian foliation on $M$ with leaves of dimension $k \ge 1$. Then any two leaves in $M$ are at distance at most $\pi/2$ from one another.
Moreover, in case of equality we have the following up to metric congruence:

(1) $M$ is $\Sph^n$, and $\mathcal{F}$ is given by an almost free isometric action by $\mathsf{R}$ , $\S^1$ or $\S^3$, or $\mathcal{F}$ is the Hopf fibration $\Sph^{15} \to \Sph^8(1/2)$,

(2) $M$ is $\mathbb{P}^{}(\C)$ and $\mathcal{F}$ is induced from a $3$-dimensional foliation from (1). 

\end{cor}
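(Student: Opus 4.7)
The plan is to reduce the corollary to Theorem \ref{maintheorem} combined with the classification of Riemannian foliations on round spheres cited in the introduction (Gromoll--Grove, Lytchak--Wilking, Wilking).

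First, I invoke Molino's structure theorem: the decomposition $\bar{\mathcal{F}}$ of $M$ by leaf closures is a singular Riemannian foliation whose leaves are closed submanifolds, and the natural projection $f \colon M \to X := M/\bar{\mathcal{F}}$ is a manifold submetry onto an Alexandrov space $X$ with $\mathrm{curv}\,X \geq 1$. Since each leaf of $\mathcal{F}$ has dimension $\geq 1$, so does each leaf closure, so no fiber of $f$ is a point. The distance between two leaves equals the distance of their images in $X$. Applying Theorem \ref{maintheorem} to $f$, cases (1) and (3) are excluded because they require point fibers; this leaves $\diam X \leq \pi/2$, with the join structure of case (2) describing the equality situation. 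This establishes the distance bound.

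Assume now $\diam X = \pi/2$. By case (2) of Theorem \ref{maintheorem}, either (a) $M$ is the metric join $\Sph_- * \Sph_+$ and $f$ is the join of manifold submetries $f_\pm \colon \Sph_\pm \to X_\pm$, or (b) there is a Riemannian submersion $f_1 \colon \Sph \to M$ with $f \circ f_1$ of type (a). In subcase (a), $M$ is a round sphere and $\mathcal{F}$ is a Riemannian foliation on $\Sph^n$ with leaves of constant dimension $k \geq 1$. By the classification cited in the introduction, every Riemannian foliation on a round sphere, except for the Hopf fibration $\Sph^{15} \to \Sph^8(1/2)$, is homogeneous, hence given by the orbits of an isometric action. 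Because the leaves all have the same positive dimension, this action is almost free, and the only connected Lie groups admitting almost free isometric actions on round spheres with one- or three-dimensional orbits are, up to quotienting by kernels, $\mathsf{R}$, $\S^1$, and $\S^3$. This yields conclusion (1).

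In subcase (b), the composition $f \circ f_1 \colon \Sph \to X$ is a manifold submetry whose fibers form a Riemannian foliation $\mathcal{G}$ on $\Sph$ with closed leaves that are $f_1$-saturated and strictly larger than the (positive-dimensional) fibers of $f_1$, since $\mathcal{F}$ itself has leaves of positive dimension. Applying the sphere classification to $\mathcal{G}$ and tracing through the Riemannian submersion $f_1$, one identifies $\mathcal{G}$ as a homogeneous three-dimensional $\S^3$-foliation on $\Sph^{2n+1}$ whose acting group contains the Hopf circle action. Thus $f_1$ is the Hopf circle bundle $\Sph^{2n+1} \to \CP^n$, and $\mathcal{F}$ is the descended foliation on $\CP^n$ by $\S^3$-orbits modulo the Hopf $\S^1$, yielding conclusion (2).

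The hardest step is the last one: ruling out the other possible bases arising in Theorem \ref{maintheorem}(2b), namely $\HP^n$, $\CP^{odd}/\Z_2$, and the reducible spherical space forms $\Sph^n/\Gamma$. This requires combining the rigidity of almost free $\S^3$-actions on round spheres (which up to conjugation are essentially Hopf actions) with the observation that the fibers of $f_1$ in each of these alternatives cannot be properly enlarged inside a sphere foliation of the listed types while still producing a positive-dimensional quotient foliation on $M$ with $\diam = \pi/2$ leaf space.
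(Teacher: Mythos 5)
Your overall route is the same as the paper's: pass to the leaf closures to get (via Molino) a manifold submetry $f\colon M\to X$, exclude cases (1) and (3) of Theorem \ref{maintheorem} because leaves of dimension $k\ge 1$ forbid point fibers, deduce $\diam X\le \pi/2$, and in the equality case feed the classification of Riemannian foliations of the round sphere into case (2), using the fact that a Riemannian foliation on the base of a Riemannian submersion lifts to the total space with leaf dimension increased by the fiber dimension. Up to and including subcase (a) this matches the paper's argument and is fine.

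The genuine gap is in subcase (b), which you yourself call ``the hardest step'' and then do not carry out. Concretely: (i) you apply the sphere classification to $\mathcal{G}$, the decomposition of $\Sph$ into fibers of $f\circ f_1$, i.e.\ preimages of leaf closures; leaf closures of a Riemannian foliation need not have constant dimension, so $\mathcal{G}$ is in general only a singular Riemannian foliation and the Gromoll--Grove/Lytchak--Wilking/Wilking classification of regular Riemannian foliations does not apply to it directly --- the paper instead lifts the regular foliation $\mathcal{F}$ itself through $f_1$, which is the correct object. (ii) Your identification of the lift as a $3$-dimensional $\S^3$-foliation of $\Sph^{2n+1}$ containing the Hopf circle already presupposes that $f_1$ has $1$-dimensional fibers, i.e.\ that $M=\CP^n$, which is exactly what must be proved; if $M=\HP^n$ the lift would have dimension $k+3\ge 4$, forcing it to be the $7$-dimensional Hopf foliation of $\Sph^{15}$, and excluding this requires an actual argument that the octonionic Hopf fibers are not saturated by the quaternionic Hopf $3$-spheres --- you give none. (iii) The ``observation'' you invoke to dispose of the space-form and $\CP^{odd}/\Z_2$ bases is not obvious and, as stated, cannot simply be asserted: when $f_1$ is a covering the lifted foliation has the same dimension $k\ge 1$ and is perfectly admissible in the classification (for instance the Hopf circle foliation is invariant under $\Z_m\subset \S^1$ and descends to the reducible lens space $\Sph^{2n+1}/\Z_m$ with leaf space $\CP^n$ of diameter $\pi/2$), so ruling these bases out of the stated conclusion requires a genuine argument, or at least a precise interpretation of ``up to metric congruence'' through the submersion $f_1$, neither of which your proposal supplies. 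Until (i)--(iii) are addressed, the equality part of the corollary is not proved.
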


The proof is a direct consequence of Theorem \ref{maintheorem} and the classification of Riemannian foliations on the unit sphere \cite{gromoll1988low},  \cite{LW} and \cite{wi2001}.
The only additional input needed, is the obvious fact that any Riemannian foliation $\mathcal{F}$ on the base $N$ of a Riemannian submersion $M \to N$ lifts to $M$ with leaf dimension increasing by the dimension of the fiber.

\vskip0.1in

For general singular Riemannian foliations $\mathcal{F}$ on $M$, it is of course limited what we can say. In fact, even on the unit sphere there are  now numerous examples known \cite{Ra},
and one is far from a classification. Nevertheless, our results show that 

\begin{cor}[Singular foliation rigidity]\label{sing fol}
Let $M$ be a Riemannian manifold with $\sec M \ge 1$ and $\mathcal{F}$ a singular Riemannian foliation on $M$ having leaves at distance at least $\pi/2$. Then either

(1) $M$ is a twisted sphere and $\mathcal{F}$ has two point leaves at maximal distance,

(2) There is a Riemanian submersion $\Sph \to M$,

\no or

(3) $\mathcal{F}$ is a singular Riemannian foliation on $\Bbb{OP}^2$ with a point leaf.
\end{cor}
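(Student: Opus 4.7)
The plan is to deduce this corollary directly from the Submetry Rigidity Theorem~\ref{maintheorem} applied to the projection onto the leaf space. First I would verify that the natural quotient map $f: M \to M/\bar{\mathcal{F}} =: X$ associated with the closure of the singular Riemannian foliation $\mathcal{F}$ is a manifold submetry in the sense introduced in Section~2. By standard theory --- an extension to the singular setting of the Molino-type results \cite{Mo} used earlier for regular foliations --- $X$ is an Alexandrov space with curv$\,X \ge 1$, every leaf is a smooth closed submanifold of $M$ (permitting a discrete set of zero-dimensional leaves), and every geodesic starting perpendicular to a leaf stays perpendicular to each leaf it meets; in other words, $f$ meets the definition of a manifold submetry. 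The hypothesis that some pair of leaves sit at distance at least $\pi/2$ translates directly into $\diam X \ge \pi/2$, so Theorem~\ref{maintheorem} applies to $f$.

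Next I would translate each of its three conclusions into the three conclusions of the corollary. In case~(1) of Theorem~\ref{maintheorem}, $f$ is topologically a suspension $\Sigma F : \Sigma \Sph^n \to \Sigma Y$, so $M$ is a twisted sphere and the two suspension points of $M$ are point fibers of $f$ --- that is, point leaves of $\mathcal{F}$ at maximal distance --- matching case~(1) of the corollary. In case~(2), we either have $M = \Sph_{-} * \Sph_{+}$ isometrically (already a round unit sphere, so the identity map serves as a trivial Riemannian submersion $\Sph \to M$) or there is directly a Riemannian submersion $f_1 : \Sph \to M$ from a unit sphere; either subcase yields case~(2) of the corollary. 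Finally, case~(3) of the theorem forces $M$ to be isometric to $\Bbb{OP}^2$ with a point fiber of $f$, which is exactly a point leaf of $\mathcal{F}$, giving case~(3).

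The main obstacle is the first step: justifying that for a genuinely singular Riemannian foliation --- whose leaves may vary in dimension or be zero-dimensional --- the quotient projection qualifies as a manifold submetry in the precise sense required by Theorem~\ref{maintheorem}, and in particular that horizontal geodesics behave properly across strata. Once this structural fact is in hand, the remainder of the argument is a purely formal case-by-case reading of the submetry rigidity theorem, since each of its three alternatives either identifies $M$ topologically/metrically (as a twisted sphere or as the Cayley plane) or produces the requisite Riemannian submersion $\Sph \to M$ from a unit sphere.
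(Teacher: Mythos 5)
Your proposal is correct and follows essentially the same route as the paper: the corollary is read off directly from Theorem~\ref{maintheorem} applied to the manifold submetry $f\colon M \to M/\bar{\mathcal{F}}$, with the standing fact (stated at the start of Section~2, via \cite{Mo} and the closure of the leaves) that quotients of (singular) Riemannian foliations are manifold submetries, and with point fibers of $f$ corresponding to point leaves of $\mathcal{F}$. Your case-by-case translation of the three alternatives matches the paper's intended derivation.
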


\vskip0.1in

We conclude our note by a description of the more special \emph{homogeneous} case, i.e., when our manifold submetry $f: M \to M/\G = : X$ is the orbit map for an isometric action by a compact Lie group $\G$ on $M$.
As in the case of singular Riemannian foliations, the diameter of $M/\G$ can take any value up to $\pi$.
This is in contrast to the case of actions on the standard sphere $\Sph$, where simple convexity arguments as already mentioned yield the following:
 The orbit space $\Sph/\G$ has diameter $\ge \pi/2$ if and only if it is a reducible linear action. No values are taking between $\pi/2$ and $\pi$,
and diameter $\pi$ is equivalent to the action having fixed points, i.e., the action is a suspension.

In general we have

\begin{cor}[Group action Rigidity]
Let $\G$ be compact Lie group acting on $M^n$ isometrically, where $\sec M \ge 1$. If $\diam M/\G \ge \pi/2$, then either

(1) $M$ is a twisted sphere with action topologically the suspension of a linear action on $\Sph^{n-1}$,

(2) $M$ is isometric to either $\Sph^n$ or to the base of a Riemannian submersion $\Sph^n \to N$, and the action on $N$ is induced from a reducible action of $\G$ on $\Sph^n$,   or

(3) $M$ is the Cayley plane $\Bbb{OP}^2$ and $G$ has an isolated fixed point.
\end{cor}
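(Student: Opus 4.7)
The plan is to apply \tref{maintheorem} to the orbit map $\pi: M \to M/\G$. Since $\G$ acts by isometries, its orbits are closed submanifolds (allowing a discrete set of points) and any geodesic perpendicular to one orbit remains perpendicular to every orbit it meets, so $\pi$ is a manifold submetry. The hypothesis $\diam M/\G \geq \pi/2$ then triggers one of cases (1), (2), (3) of \tref{maintheorem}. In case (3), the point fiber of $\pi$ is necessarily a single $\G$-orbit, hence an isolated $\G$-fixed point in $M = \Bbb{OP}^2$, which is (3) of the corollary. In case (1), $\pi$ is topologically the suspension of a manifold submetry $F: \Sph^{n-1} \to Y$; the two suspension points of $\Sigma Y = M/\G$ are single $\G$-orbits, hence fixed points $p_\pm \in M$, and the slice representation at $p_+$ realizes the induced action on a small geodesic sphere as a linear $\G$-action on $\Sph^{n-1}$. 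Combined with the topological suspension structure on $\pi$, this yields (1) of the corollary.

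The substantive case is case (2) of \tref{maintheorem}. Here there are dual totally geodesic submanifolds $A, A' \subset M$ saturated by $\G$-orbits, and either (2a) $M = \Sph_- \ast \Sph_+$ is a unit sphere with $A = \Sph_-$, $A' = \Sph_+$, or (2b) there is a Riemannian submersion $f_1: \Sph_- \ast \Sph_+ \to M$ for which $\pi \circ f_1$ is of type (2a). In subcase (2a), each $\Sph_\pm$ is the $\pi$-preimage of a $\G$-invariant convex subset of $M/\G$, hence $\G$-invariant, so the $\G$-action preserves the orthogonal splitting $\R^{n+1} = V_- \oplus V_+$ of the ambient Euclidean space and is therefore a reducible linear representation on $\Sph^n$, giving (2) of the corollary with $M = \Sph^n$.

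For subcase (2b) I must lift the $\G$-action on $M$ to a reducible isometric action on $\Sph^n$ along $f_1$. My approach is to invoke the classification of Riemannian submersions from the unit sphere in \cite{gromoll1988low} and \cite{wi2001}: in every model $f_1$ is a Hopf-type fibration, the induced map to $\HP^n$, or a related $\Z_2$ quotient, and in each case the isometry group of $M$ is a known quotient of the centralizer in $\Iso(\Sph^n)$ of the fiber action, so the isometric $\G$-action on $M$ lifts to an isometric $\G$-action on $\Sph^n$ covering it and preserving the fibers of $f_1$. Since $\pi \circ f_1: \Sph^n \to M/\G$ is then itself a manifold submetry of type (2a) with $\G$-invariant join decomposition, the argument from the previous paragraph identifies the lifted action as a reducible linear representation, completing (2) of the corollary. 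The principal obstacle I anticipate is exactly this lifting step: one must verify, using the explicit classification, that the kernel of $\Iso(\Sph^n) \to \Iso(M)$ is central so that the lift is well-defined and canonical, which reduces to a finite case-by-case check against the model list.
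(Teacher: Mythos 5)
Your proposal is correct and takes essentially the paper's route: the corollary is stated there as a direct consequence of Theorem~\ref{maintheorem} applied to the orbit map (which is a manifold submetry), with cases (1) and (3) read off from point fibers being fixed points, and the rigid case (2) settled exactly as you do, by using the explicitly known isometry groups of the model bases (Hopf quotients and reducible space forms) to realize the action as induced from a reducible action on the sphere -- this is precisely what the paper's closing remark alludes to. Only one small correction to your case (2b): the isometry group of each base is a quotient of the \emph{normalizer}, not the centralizer, in $\Iso(\Sph)$ of the fiber group (complex conjugation on $\CP^n$ and the $\Sp(1)$-factor for $\HP^n$ normalize but do not centralize the fibers), and with that adjustment the lifting argument goes through unchanged, the lifted group in general being an extension of $\G$ by the fiber group, which is all the statement requires.
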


\begin{rem}
Since the full isometry group is known for all spaces in (2) and (3), it is possible to get exhaustive and complete statements in those cases.

In case (1) $\G$ has two fixed points at maximal distance $\diam M = \diam M/\G \ge \pi/2$, and from critical point theory there is a smooth $\G$ invariant gradient like vector field on $M$
which is radial near the fixed points at maximal distance. This means that $M = \Disc^n \cup_f \Disc^n$, where $f: \Sph^{n-1} \to \Sph^{n-1}$ is a $\G$ invariant diffeomorphism,
where the action of $\G$ is the isotropy action at a fixed point.
\end{rem}

Of course, the positively curved twisted sphere $M$ above can potentially be exotic, namely if $f$ does not extend to the disc $\Disc^n$. We note that if $\G$ acts
transitively on $\Sph^{n-1}$, then $f$ is actually linear (cf. Lemma 2.6 of \cite{GrS}) and hence it extends. Also in \cite{GK} it was proved that if $\G$ acts by cohomogeneity one on $\Sph^{n-1}$, then $f$
 is isotopic to a linear map and hence again extends. For much smaller actions, we of course expect there to be exotic $\G$ gluing diffeomorphism.
However, at the moment we do not know of any theory that addresses the following problem:

\begin{problem}
Given a linear $\G$ action on $\Sph^{n-1}$. Which properties of the action prevents the existence of exotic $\G$ equivariant diffeomorphisms.
\end{problem}

{}

\end{document}